\author{Nejib Saadaoui
	\thanks{Nejib Saadaoui. 
		Université de Gabès, 
		Institut Supérieur d'Informatique et de Multimédia de  Gabès,
		Campus universitaire-BP 122, 6033 Cité El Amel 4, Gabès, TUNISIE.		
		, e-mail:\textbf{najibsaadaoui@yahoo.fr}}}
\title{Split extensions of BiHom-Lie algebras }
\newtheorem{theorem}{Theorem}[section]
\newtheorem{thm}[theorem]{Theorem}
\newtheorem{lem}[theorem]{Lemma}
\newtheorem{defn}[theorem]{Definition}
\numberwithin{equation}{section}
\newcommand{\enstq}[2]{\left\{#1\mathrel{}\middle /\mathrel{}#2\right\}}
\newcommand{\K}{\mathbb{K}}
\begin{document} 
	\maketitle
	
	\begin{abstract}
In this paper, we introduce the notion of split extension  of  BiHom-Lie algebra
and construct the corresponding cohomology. Also,   
we establish a one-to-one correspondence between the equivalence classes 
of  extensions of a
BiHom-Lie algebra $L$ by an abelian BiHom-Lie algebra $V$ and 
 it's second cohomology group.		
	\end{abstract}
\section*{Introduction}
Hom-Lie algebras were originally introduced in \cite{HLS} is an anti-commutative algebra satisfying a Jacobi identity twisted
by a linear map. When this linear map is the identity, the deﬁnition of Lie algebra is recovered.


In \cite{GMMP}, Graziani et al. 
defined  BiHom–Lie algebras.
that is a generalized Hom-Lie algebra endowed with two commuting
multiplicative linear maps. As in the Hom-Lie case, if the two linear maps are equal to the identity we find the Lie algebra.\\

In Lie algebras, the second cohomology group  represents central extensions \cite{M,DBT}.  
The aim of this paper 
is to define the extension of BiHom-lie algebra and generalize the provious result    to BiHom-Li case .

This paper is organized as follows. In Section 1 we recall some preliminaries on BiHom-Lie algebras. In section 2 we introduce the notion of extension of BiHom-lie algebra and construct the   representation and 2-cocycle  corresponding  to  split extension of BiHom-Lie algebras. In section 3 we show that any   split extension $(M,d',\alpha_{M},\beta_{M})$ of $(L,\delta,\alpha)$ by $ (V,\mu, \alpha_{V},\beta_{V}) $ is equivalent to the extension $(L\oplus V,d,\alpha+\alpha_{V},\beta+\beta_{V})$. Finally, in section 4
we study the abelian split 
extensions of BiHom-Lie algebras and their relations with the second cohomology group.

Throughout the paper we will consider vector spaces over a ﬁeld $ \K $.

	\section{BiHom-Lie algebras}
%
%
In this section we recall  and introduce some definitions about BiHom-Lie  algebras  that we will need in the remainder of the paper.	
\begin{defn}\cite{GMMP}
	A BiHom-Lie algebra over a field $ \K $ is a $ 4 $-tuple $ \left( L,[\cdot,\cdot],\alpha,\beta \right)  $, where 
	$  L $ is
	a  $ \K $-linear space, $ [\cdot,\cdot] \colon L\times L\to L$ a bilinear map and $ \alpha,\, \beta\colon L\to L $
	linear mappings satisfying the following identities:
	\begin{align}
		&\alpha\circ \beta=\beta\circ \alpha, \\
		&[\beta(x),\alpha(y)]=-[\beta(y),\alpha(x)]    \text{  (skew-symmetry),}\\
		&\left[ \beta^{2}(x),[\beta(y),\alpha(z)]\right] +\left[\beta^{2}(y), [\beta(z),\alpha(x)]\right] + \left[ \beta^{2}(z),[\beta(x),\alpha(y)]\right] =0 \\
		&\qquad\qquad\text{  (BiHom-Jacobi condition)          .}   \nonumber       
	\end{align}		
	A BiHom-Lie algebra is called a multiplicative BiHom-Lie algebra if $ \alpha $ and $ \beta $ are algebraic
	morphisms, i.e. 
\[ \alpha\left( [x,y] \right) =[\alpha(x),\alpha(y)]  \quad  \text{  and   }   \quad   \beta\left( [x,y] \right) =[\beta(x),\beta(y)]   \]	
for any $ x,\,y,\, z\in L $.\\	
A homomorphism
$f\colon \left( L,[\cdot,\cdot],\alpha,\beta \right) \to \left( L',[\cdot,\cdot]',\alpha',\beta' \right) L$
 is said to be a morphism of BiHom-Lie algebras if $  f\circ \alpha =\alpha' \circ f$,  $f\circ\beta =\beta'\circ f$ and 
\begin{gather*} 
	 f([x,y]) =[f(x),f(y)]' \quad \forall x,\, y\in L .                     
\end{gather*}	
\end{defn}
\begin{defn}
 An abelian BiHom-Lie algebra $ \left(V, \alpha,\beta\right)  $ is a $\K$-vector space $V$ endowed with trivial bracket and any linear map $ \alpha\colon V\to V $ and  $ \beta\colon V\to V $.	
\end{defn}	
\begin{defn}
Let $\left(M,d,\beta,\alpha \right)   $ be a  BiHom-Lie algebra. 
The set of $2$-cochains on $M$, which we denote by $ C^2(M) $, is the set of  bilinear  maps $f$  from
$ M^2 $
to $ M$ satisfies \[f\left( \beta(a),\alpha(b)\right) =-f\left( \beta(b),\alpha(a)\right),\quad \forall a,\,b\in M    \].

A $ 1 $-cochain on $ M $  is defined to be a linear map $ f\in Hom(M) $ satisfies $f\circ \alpha =\alpha\circ f$ and $f\circ \beta =\beta\circ f$.
  Denote by $ C^1(M) $
  the set of $ k $-BiHom-cochains:
 \[ C_{B}^1(M)= \enstq{   f\in Hom(M)}{\alpha\circ f=f\circ \alpha,\beta\circ f=f\circ \beta  }       \]
\end{defn}
\begin{defn}
A sub-vector space $H$ of $M$  is a BiHom subalgebra of $ \left( M,d,\alpha,\beta \right)  $ if  $\alpha(H)\subset H$,
$\beta(H)\subset H$, and $ [H,H]\subset H $.
\end{defn}
\begin{defn}
A sub-vector space $I$ of $M$ is an ideal of $ \left( M,d,\alpha,\beta \right)  $ if 
$\alpha(I)\subset I$,
$\beta(I)\subset I$, $d(L,I)\subset I$ and $ d(I,L) \subset I$.
\end{defn}
\section{Extensions and representations of BiHom-Lie algebras}
Extensions of Lie algebras are studied in several papers, see  for example \cite{DBT,FP}. 
In this section, we will concentrate our efforts on the development of the general theory of  extensions for BiHom-Lie algebras, and we will 
we give a purely cohomological interpretation of abelian split  extensions.

\begin{defn}
	Let $ ( L,\delta,\alpha,\beta)    $, $ (V,\mu,\alpha_{V},\beta_{V}) $, and  $ ( M,d_{M},\alpha_{M},\beta_{M})     $ be   BiHom-Lie algebras
	and $ i\colon V \to M $, $ \pi \colon M\to L $
	be morphisms of  BiHom-Lie algebras. The following sequence of
	BiHom-Lie algebra 
	$$0\longrightarrow (V,\mu,\alpha_{V},\beta_{V})\stackrel{i}{\longrightarrow} (M,d_{M},\alpha_{M},\beta_{M})\stackrel{\pi}{\longrightarrow }(L,\delta,\alpha,\beta) \longrightarrow 0,$$
	is a short exact sequence if $ Im(i)=\ker (\pi) $, $ i $ is injective and $\pi$ is surjective			
	In this case, we call $ ( M,d_{M},\alpha_{M},\beta_{M})     $
	an extension of $ ( L,\delta,\alpha,\beta)    $ by $ (V,\mu,\alpha_{V},\beta_{V}) $.
\end{defn}	
\begin{defn}
	Two extensions 
	\begin{gather*}
		\xymatrix {\relax 0 \ar[r] &( V_{1},\mu_{1},\alpha_{V_{1}},\beta_{V_{1}} )   \ar[d]_{\varphi}\relax\ar[r]^-{i_{1}} \relax & (M_{1} ,d_{1},\alpha_{V_{1}},\beta_{V_{1}}) \ar[d]_{\Phi} \ar[r]^-{\pi_{1}}& (L_{1},\delta_{1},\alpha_{1},\beta_{1})\ar[r] \ar[d]_{s}&0 \\
			\relax	0\ar[r] & ( V_{2},\mu_{2},\alpha_{V_{2}},\beta_{V_{2}} ) \ar[r]^-{i_{2}}     & (M_{2},d_{2},\alpha_{M_{2}},\beta_{M_{2}}) \ar[r]^-{\pi_{2}} &(L_{2},\delta_{2},\alpha_{2},\beta_{2})\ar[r]&0}
	\end{gather*}	
	are equivalent if there is an isomorphism of BiHom-Lie algebra \[\Phi:(M_{1},d_1,\alpha_{1},\beta_{1})\rightarrow (M_{2},d_2,\alpha_{M_{2}},\beta_{M_{2}})\] such that $\Phi \circ \ i_{1}= i_{2}\circ \varphi$ and $\pi_{2}\ \circ\ \Phi=s\circ \pi_{1}.$
\end{defn}

\begin{defn}	
	An extension  \[0\longrightarrow (V,\mu,\alpha_{V},\beta_{V})\stackrel{i}{\longrightarrow} (M,d,\alpha_{M},\beta_{M})\stackrel{\pi}{\longrightarrow }(L,\delta,\alpha,\beta) \longrightarrow 0\] is called:
	\begin{enumerate}[(1)]
		\item trivial if  there exists an ideal $ I $ complementary to
		$\ker \pi$, 
		\item	split if there exists a BiHom-subalgebra $ S\subset M$
		complementary to $ \ker \pi $,
		\item central if the $ \ker \pi $ is contained in the center $ Z(M) $ of $ L $. That is $ d(i(V),M)=0. $
		\item  abelian if $ \mu=0 $. In this case we say $ ( M,d,\alpha_{M},\beta_{M})     $ 
		an extension of $ ( L,\delta,\alpha,\beta)    $ by $ (V,\alpha_{V},\beta_{V}) $. 	
	\end{enumerate}

\end{defn}	
Let  $ (L,\delta,\alpha,\beta) $ be a BiHom-Lie algebra   and let  $ (V,\alpha_{V},\beta_{V})$ be an abelian BiHom-Lie algebra. Let 

$0\longrightarrow (V,\alpha_{V},\beta_{V})\stackrel{i_{0}}{\longrightarrow} (L\oplus V,d,\alpha+\alpha_V,\beta+\beta_{V})\stackrel{\pi_{0}}{\longrightarrow }(L,\delta,\alpha,\beta) \longrightarrow 0$ be a short exact sequence, where 
$ i_{0}(v)=v $ and $ \pi_{0}(x+v)=x $ for all $x,y\in L, u,v\in V$.
We want to determine the conditions under which $ \left( L\oplus V,d,\alpha+\alpha_V,\beta+\beta_{V}\right)  $ is a split extension of $ ( L,\delta,\alpha,\beta)    $ by $ (V,\mu,\alpha_{V},\beta_{V}) $.
%
%
%
%
%
%
%
For convenience, we introduce the following notation for certain subspaces  of cochains  on $ M=L\oplus V $:
\begin{gather*}
	 C^{2}(LV,V)=\enstq{   f\in Hom(M^2,M)}{f\colon L\times V\to V }, \\ 
	 C^{2}(VL,V)=\enstq{   f\in Hom(M^2,M)}{f\colon V\times L\to V },\\
	 C^{2}(L^2,V)=\enstq{   f\in Hom(M^2,M)}{f\colon L\times L\to V }.                                     
\end{gather*}

For any $ f, g\in  C^2(M) $, 
we define $f \circ g\in  Hom(M^{3},M) $ by
\begin{gather*}
f \circ g	(a,b,c)\\=f(\beta_{M}^2(a),f(\beta_{M}(b),\alpha_{M}(c)))
	+f(\beta_{M}^2(b),g(\beta_{M}(c),\alpha(a)))
	+f(\beta_{M}^2(c),g(\beta_{M}(a),\alpha_{M}(b)))
\end{gather*}
 and  
$ [f,g]\in   Hom(M^{3},M) $ by 
\begin{equation}\label{bracketNR1}
	[f,g]=f \circ g +g\circ f.
\end{equation}
Then, if $f=g$ , we have
\begin{gather*}
[f,f](a,b,c)=	2f \circ f	(a,b,c)\\=2f(\beta^2(a),f(\beta(b),\alpha(c)))
	+2f(\beta^2(b),f(\beta(c),\alpha(a)))
	+2f(\beta^2(a),f(\beta(b),\alpha(c))).
\end{gather*}
Hence,
\begin{gather*}
\Big(\left(M,f,\alpha_{M},\beta_{M} \right) \text{  BiHom-Lie algebra      }\Big) \iff \Big([f,f]=0\Big).	
\end{gather*}
Thus, $[d,d]=0$.

Since  $ V=\ker(\pi_{0}) $, then $ V $ is an ideal of BiHom-Lie algebra $L\oplus V  $ and $d\in C^2(M)$, then $ [d,d]=0 $ and there exists       $\lambda_{l}\in C^{2}(L V,V)$, $\lambda_{r}\in C^{2}(V L,V)$, $\theta\in C^{2}(L^2,V) $   
such that 
$d=\delta+\lambda_{r}+\lambda_{l}+\theta $. Hence
\begin{gather*}
	[\delta+\lambda_{r}+\lambda_{l}+\theta,\delta+\lambda_{r}+\lambda_{l}+\theta ] =0                                             
\end{gather*}
Therefore
\begin{gather*}
	\underbrace{	[\delta,\delta ]}_{=0}+\underbrace{2	[\delta,\lambda_{r}+\lambda_{l} ]+[\lambda_{r}+\lambda_{l},\lambda_{r}+\lambda_{l} ]}_{\in C^{2,1}}
	+\underbrace{	[\delta+\lambda_{r}+\lambda_{l}, \theta]}_{\in C^{3,0}}
 =0.                                             
\end{gather*}
Then we can deduce that
\begin{align}
	&	2[\delta,\lambda_{r}+\lambda_{l} ]+[\lambda_{r}+\lambda_{l},\lambda_{r}+\lambda_{l} ]:\text{ The "module" or "representation" relation          }\label{repOCT}\\
	&[\delta+\lambda_{r}+\lambda_{l}, \theta]=0: \theta  \text{ is a $2$-cocycle with values in   } V.\label{cocycle2}
\end{align}	
By  $ d $ is a $ 2 $- cochain on $M$, we have
 \begin{gather}
 	d\left( \beta(x)+\beta_{V}(v),\alpha(y)+\alpha_{V}(w)\right) =-d\left( \beta(y)+\beta_{V}(w),\alpha(x)+\alpha_{V}(v)\right)\label{cocyDec} 
\end{gather}
 for all $x,y\in L$ and $v,w\in V$.
 
 The equalities \eqref{repOCT}, 
 and \eqref{cocyDec} make the following definition:
 \begin{defn}
 	A representation (module ) of the BiHom-Lie algebra $ \left(L,\delta,\alpha,\beta \right)  $  is a Hom-module $ \left(V,\alpha_V,\beta_V \right)  $    equiped with two $ L $-actions (left and right) $ \lambda_l\colon L\times V\to V $,  $\lambda_r\colon V\times L\to V  $ and a bilinear map $ \theta\in C^{2,0}(L^2,V) $ satisfying the following axioms,
 	\begin{align*}
 		\lambda_{r}\left(\beta_V(v),\alpha(y) \right)&=- \lambda_{l}\left(\beta(y),\alpha(v) \right)\\	
 		\lambda_{r}\left(\beta^{2}(v),\delta(\beta(x),\alpha(y)) \right)&=\lambda_{l}\left(\beta^{2}(x),\lambda_{r}(\beta(v),\alpha(y)) \right)+\lambda_{l}\left(\beta^{2}(y),\lambda_{r}(\beta(x),\alpha(v)) \right)  	                              
 	\end{align*}
 	for any $ x,y\in L$ and $v\in V $.  We say that   $ \left(V,\lambda_{r}+\lambda_{l},\alpha_V,\beta_V \right)  $ is a representation of $L$ by $V$.            
 \end{defn}
Now, by \eqref{cocycle2} and \eqref{cocyDec} we define the $2$ cocycle as follow.
\begin{defn}
	Let $ \left(L,\delta,\alpha,\beta \right)  $ be a  BiHom-Lie algebra and $ \left(V,\lambda_{r}+\lambda_{l},\alpha_V,\beta_{V} \right)  $ be a representation of $ L $. If the bilinear map 
	$ \theta $ satisfies  
	\begin{gather*}
			\theta(\beta(x),\alpha(y))=-\theta\left( \beta(y),\alpha(x)\right)  \text{  and   }\\ 
		\theta\left(\beta^{2}(x),\delta(\beta(y),\alpha(z)) \right)
		-\theta\left(\beta^{2}(y),\delta(\beta(x),\alpha(z)) \right)
		+\theta\left(\beta^{2}(z),\delta(\beta(x),\alpha(y)) \right)\\
		+\lambda_{l}\left(\beta^{2}(x),\theta(\beta(y),\alpha(z)) \right)
		-\lambda_{l}\left(\beta^{2}(y),\theta(\beta(x),\alpha(z)) \right)
		+\lambda_{l}\left(\beta^{2}(z),\theta(\beta(x),\alpha(y)) \right)=0
	\end{gather*}	
	Then $ \theta $ is called a $ 2 $- cocycle  of the BiHom-Lie algebra $ \left(L,\delta,\alpha,\beta \right)  $ related to the representation $ \left(V,\lambda_{r}+\lambda_{l},\alpha_V,\beta_{V} \right)  $, or simply a $ 2$-cocycle  of $ L $ on $V$. The set of all $ 2 $-cocycles on $ V $ is denoted $ Z^{2}(L,V,\lambda_{r}+\lambda_{l}) $.	                              
\end{defn}
\begin{defn}
		Let $ ( V,\lambda_{r}+\lambda_{l} ,\alpha_V,\beta_{V}) $ be a representation  of  a  $ L $ by $ V $ and $d=\delta+\lambda_{r}+\lambda_{l}$.
We call $2$-coboundary operator of BiHom-Lie algebra $L$ the  map 
$ D_{2}\colon C^1(L,V)\to C^2(L,V) $
defined by

\begin{equation*}
	  D_{2}(f)=[d,f]                                              
\end{equation*}
 where $[\cdot,\cdot]$ is the bilinear map defined in \eqref{bracketNR1}. Then we have 
 \begin{equation*}
 Z^{2}(L,V,\lambda_{r}+\lambda_{l})=\ker( D_{2}).
\end{equation*}
\end{defn}
We summarize the main facts in the theorem below.
\begin{theorem}
	\[0\longrightarrow \left(V,\alpha_V,\beta_{V} \right)\stackrel{i_{0}}{\longrightarrow} (L\oplus V,d,\alpha+\alpha_V,\beta+\beta_{V})\stackrel{\pi_{0}}{\longrightarrow }(L,\delta,\alpha,\beta) \longrightarrow 0\] is an   extension of $L$ by $V$ 
	if and only if $ d=\delta+\lambda_{r}+\lambda_{l}+\theta $, where $ \lambda_{r}+\lambda_{l} $  is a representation of $ L $ on $V$  and $\theta  $ is a $2$-cocycle  of $L$ on $V$.   We called the previous  extension, a standard split extension of $L$ by $V$.                   
\end{theorem}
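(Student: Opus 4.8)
The plan is to establish both implications at once by unwinding, in this concrete situation, what it means for the displayed sequence to be an extension, and then matching the resulting conditions term by term with the axioms of a representation and of a $2$-cocycle. Throughout I would use the criterion recorded just before the theorem: $(M,f,\alpha_M,\beta_M)$ is a BiHom-Lie algebra if and only if $f\in C^2(M)$ and $[f,f]=0$, where $[\cdot,\cdot]$ is the bracket \eqref{bracketNR1}.

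First I would clear away the conditions that hold automatically. In the sequence $0\to(V,\alpha_V,\beta_V)\stackrel{i_0}{\longrightarrow}(L\oplus V,d,\alpha+\alpha_V,\beta+\beta_V)\stackrel{\pi_0}{\longrightarrow}(L,\delta,\alpha,\beta)\to 0$ the map $i_0$ is injective, $\pi_0$ is surjective, $\operatorname{Im}i_0=V=\ker\pi_0$, the maps $\alpha+\alpha_V$ and $\beta+\beta_V$ commute (because $\alpha\beta=\beta\alpha$ and $\alpha_V\beta_V=\beta_V\alpha_V$), and both $i_0$ and $\pi_0$ intertwine the twisting maps. Hence the sequence is an extension if and only if (a) $(L\oplus V,d,\alpha+\alpha_V,\beta+\beta_V)$ is a BiHom-Lie algebra, (b) $\pi_0$ is a morphism of BiHom-Lie algebras, and (c) $i_0$ is a morphism of BiHom-Lie algebras. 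Since $V$ is abelian, (c) reads $d(v,w)=0$ for $v,w\in V$, and (b) reads $\pi_0(d(a,b))=\delta(\pi_0 a,\pi_0 b)$ for $a,b\in L\oplus V$. Together (b) and (c) force $d(a,v)\in\ker\pi_0=V$ and $d(v,a)\in V$ whenever $v\in V$, so $d$ must take the shape $d(x+v,y+w)=\delta(x,y)+\theta(x,y)+\lambda_l(x,w)+\lambda_r(v,y)$ for uniquely determined bilinear maps $\theta\colon L^2\to V$, $\lambda_l\colon L\times V\to V$, $\lambda_r\colon V\times L\to V$; that is, $d=\delta+\lambda_r+\lambda_l+\theta$. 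Conversely, any $d$ of this shape satisfies (b) and (c). So the whole statement reduces to: for such a $d$, when does (a) hold?

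Next I would analyse (a), which by the criterion above means $d\in C^2(L\oplus V)$ together with $[d,d]=0$. Projecting the skew-symmetry identity for $d$ onto the $L$- and $V$-summands and testing it on the blocks $L\times L$, $L\times V$, $V\times L$, $V\times V$ yields, respectively: skew-symmetry of $\delta$ (already true in $L$), skew-symmetry of $\theta$, the relation $\lambda_r(\beta_V(v),\alpha(y))=-\lambda_l(\beta(y),\alpha_V(v))$, and the trivial identity $0=0$; thus $d\in C^2(L\oplus V)$ is equivalent to precisely one cocycle axiom and one representation axiom, the rest being automatic. For $[d,d]=0$ I would expand the $3$-cochain $[d,d](a,b,c)=2\,d\circ d(a,b,c)$ and split it according to how many of $a,b,c$ lie in $V$ and in which summand the value lies, along the lines indicated before the theorem. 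The pieces with two or three arguments in $V$ vanish identically, since the inner brackets are then either $d|_{V\times V}=0$ or land in $V$, on which $d$ acts again by $d|_{V\times V}=0$; the $L^{3}\to L$ piece is $[\delta,\delta]=0$ since $L$ is BiHom-Lie; the $L^{3}\to V$ piece equals $[\delta+\lambda_r+\lambda_l,\theta]$ restricted to $L^{3}$ (the $\lambda_r$-terms being absent since no argument then lies in $V$), whose vanishing---after rewriting the cyclic sum via the skew-symmetry of $\delta$---is exactly the BiHom $2$-cocycle condition \eqref{cocycle2} for $\theta$; and the piece with a single $V$-argument equals the expression \eqref{repOCT} evaluated on the relevant arguments, whose vanishing is exactly the module relation. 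Combining, (a) is equivalent to ``$\lambda_r+\lambda_l$ is a representation of $L$ on $V$ and $\theta\in Z^{2}(L,V,\lambda_r+\lambda_l)$'', and with the reduction of the previous paragraph this proves the theorem.

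I expect the only delicate point to be the bookkeeping in the step $[d,d]=0$: the piece with a single $V$-argument a priori breaks into three sub-cases according to which of the three slots holds that argument, and one must check---using the cyclic symmetry of $d\circ d$ and repeated use of the skew relation $\lambda_r(\beta_V(v),\alpha(y))=-\lambda_l(\beta(y),\alpha_V(v))$, together with the BiHom-Lie axioms already available on $L$---that all three collapse to the single module identity. One must also keep careful track of which terms of $[\delta+\lambda_r+\lambda_l,\theta]$ survive on purely $L$-valued arguments. Everything else is a routine expansion of the bracket \eqref{bracketNR1}.
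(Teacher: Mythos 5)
Your proposal is correct and follows essentially the same route as the paper: the paper's "proof" is precisely the discussion preceding the theorem, where $d$ is decomposed as $\delta+\lambda_r+\lambda_l+\theta$, the condition that $(L\oplus V,d,\alpha+\alpha_V,\beta+\beta_V)$ be a BiHom-Lie algebra is encoded as $d\in C^2$ and $[d,d]=0$, and the graded pieces of $[d,d]=0$ are identified with the representation relation \eqref{repOCT} and the cocycle condition \eqref{cocycle2}. Your write-up is in fact more explicit than the paper's on the routine points (the morphism conditions on $i_0,\pi_0$ and the vanishing of the pieces with two or more $V$-arguments), but the underlying argument is the same.
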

\begin{defn}
	Let $ ( V,\lambda_{r}+\lambda_{l} ,\alpha_V,\beta_{V}) $ be a representation  of  a  $ L $ by $ V $ and $d=\delta+\lambda_{r}+\lambda_{l}$.  The extension $
	(L\oplus V,d,\alpha+\alpha_V,\beta+\beta_{V})$ is called the 
	semidirectsum of the BiHom-Lie algebras $L$  and $V$.                                                      
\end{defn}

\section{Split extensions of BiHom-Lie algebras}

In this section, we show that any split extension $M$ of $L$ by $V$  is equivalent to $L\oplus V$.
\begin{theorem}\label{extDec} 
	Let
	\[E:0\longrightarrow (V,\mu,\alpha_{V},\beta_{V})\stackrel{i}{\longrightarrow} (M,d',\alpha_{M},\beta_{M})\stackrel{\pi}{\longrightarrow }(L,\delta,\alpha,\beta) \longrightarrow 0,\]   
	be a split extension of $ L $ by $ V $. Then   there exist bilinear map
	$ d=\delta+\lambda_l+\lambda_{r}+\theta+\mu  $ such that $ \lambda_r+\lambda_{l} $ is a representation of $L$,  $\theta$ is a $2$-cocycle and    
	the  split extension  
		\[E_{0}:0\longrightarrow (V,\mu,\alpha_{V},\beta_{M})\stackrel{i_{0}}{\longrightarrow} (L\oplus V,d,\alpha+\alpha_{V},\beta+\beta_{V})\stackrel{\pi_{0}}{\longrightarrow }(L,\delta,\alpha,\beta) \longrightarrow 0,\]	
of $L$ by $V$ is equivalent to $E$. 
\end{theorem}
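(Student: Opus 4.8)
The plan is to upgrade the given splitting to a section of $\pi$ that is a morphism of BiHom-Lie algebras, use it to identify $M$ linearly with $L\oplus V$, pull the bracket $d'$ back along that identification, and read off $\lambda_l,\lambda_r,\theta$ as the homogeneous components of the pulled-back bracket. So the first step is to fix a BiHom-subalgebra $S\subset M$ with $M=S\oplus\ker\pi$ (this is exactly what "split" provides) and observe that $\pi|_S\colon S\to L$ is a linear bijection, since $\ker\pi=\operatorname{Im}i$ and $\pi$ is onto; as $S$ is stable under $\alpha_M,\beta_M$ and $d'$, the quadruple $(S,d'|_S,\alpha_M|_S,\beta_M|_S)$ is a BiHom-Lie algebra and $\pi|_S$ is an isomorphism of BiHom-Lie algebras onto $(L,\delta,\alpha,\beta)$. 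I set $\sigma:=(\pi|_S)^{-1}\colon L\to M$, so that $\pi\sigma=\mathrm{id}_L$, $\sigma\alpha=\alpha_M\sigma$, $\sigma\beta=\beta_M\sigma$, $d'(\sigma(x),\sigma(y))=\sigma(\delta(x,y))$ and $\operatorname{Im}\sigma=S$.

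Next I would define $\Phi\colon L\oplus V\to M$ by $\Phi(x+v)=\sigma(x)+i(v)$, which is a linear isomorphism because $M=S\oplus i(V)$, and which satisfies $\Phi\circ i_0=i$ and $\pi\circ\Phi=\pi_0$ by construction. Transporting the structure of $M$, put $d:=\Phi^{-1}\circ d'\circ(\Phi\times\Phi)$; using $\sigma\alpha=\alpha_M\sigma$ and $i\alpha_V=\alpha_M i$ (and the same with $\beta$) together with the direct-sum decomposition one checks $\Phi^{-1}\alpha_M\Phi=\alpha+\alpha_V$ and $\Phi^{-1}\beta_M\Phi=\beta+\beta_V$. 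Hence $\Phi$ is an isomorphism of BiHom-Lie algebras $(L\oplus V,d,\alpha+\alpha_V,\beta+\beta_V)\to(M,d',\alpha_M,\beta_M)$; in particular the left-hand side is a BiHom-Lie algebra, i.e. $[d,d]=0$.

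Then I would decompose $d$. Since $i(V)=\ker\pi$ is an ideal of $M$, $d'$ sends $(M,i(V))$ and $(i(V),M)$ into $i(V)$, so $d$ sends $(L\oplus V,V)$ and $(V,L\oplus V)$ into $V$; consequently the only $L$-valued part of $d$ lies on $L\times L$, and $d=\delta+\theta+\lambda_l+\lambda_r+\mu'$ with $\theta\in C^2(L^2,V)$, $\lambda_l\in C^2(LV,V)$, $\lambda_r\in C^2(VL,V)$ and $\mu'\colon V\times V\to V$. Because $i$ is a morphism, $\mu'(v,w)=\Phi^{-1}(d'(i(v),i(w)))=\mu(v,w)$; because $\sigma$ is a morphism into the subalgebra $S$, $d'(\sigma(x),\sigma(y))\in S$ forces $\theta=0$, which is exactly what makes $L\subset L\oplus V$ a BiHom-subalgebra complementary to $\ker\pi_0=V$, so that $E_0$ really is split. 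Finally, skew-symmetry of $d$ gives the skew-symmetry relations for $\theta,\lambda_l,\lambda_r$, and expanding $[d,d]=0$ and sorting the terms by bidegree — precisely the computation behind \eqref{repOCT} and \eqref{cocycle2} — shows that $(V,\lambda_r+\lambda_l,\alpha_V,\beta_V)$ is a representation of $L$, that $\theta\in Z^2(L,V,\lambda_r+\lambda_l)$, and (via the remaining bidegrees) that $\mu$ is compatible with the action.

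To conclude, with $\mathrm{id}_V$ and $\mathrm{id}_L$ on the outer terms and $\Phi$ in the middle, the identities $\Phi\circ i_0=i$ and $\pi\circ\Phi=\pi_0$ are exactly what is required for $E_0$ and $E$ to be equivalent extensions. The only step with genuine content is this last part of the decomposition — verifying that $[d,d]=0$ unpacks into precisely the representation and $2$-cocycle axioms — but it merely repeats the computation already carried out in Section 2 for $[\delta+\lambda_r+\lambda_l+\theta,\,\delta+\lambda_r+\lambda_l+\theta]=0$, so one just has to keep careful track of which bidegree each term lands in; everything else is formal once one notices the key point that a splitting subalgebra makes $\pi|_S$ an isomorphism of BiHom-Lie algebras.
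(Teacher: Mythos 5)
Your proof follows essentially the same route as the paper: choose a complementary BiHom-subalgebra $S$, invert $\pi|_S$ to get a section $\sigma$ that is a morphism, transport $d'$ along $\Phi(x+v)=\sigma(x)+i(v)$, and read off the components of the pulled-back bracket from the direct-sum decomposition. The only place you go slightly further is the (correct) observation that, since $\sigma$ maps into the subalgebra $S$, the component $\theta$ of the transported bracket on $L\times L$ vanishes; the paper carries a formal $\theta$ through its decomposition, but for a split extension it is likewise zero, so this is a sharpening rather than a different argument.
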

\begin{proof}
	Let 	\[0\longrightarrow (V,\mu,\alpha_{V},\beta_{V})\stackrel{i}{\longrightarrow} (M,d',\alpha_{M},\beta_{M})\stackrel{\pi}{\longrightarrow }(L,\delta,\alpha,\beta) \longrightarrow 0,\]	
	be a split extension of $ L $. Then there exist 
	a BiHom-Lie-subalgebra $ H\subset M$
	complementary to $ \ker \pi $. Since $Im\, i = \ker \pi$, we have    	
	$M= H\oplus i(V) $.   	   	
	The map  $\pi_{/H}:H\rightarrow L$ (resp $k:V\rightarrow i(V) $) defined by $\pi_{/H}(x)=\pi(x) $ (resp. $k(v)=i(v)$) is bijective, its inverse $ s  $ (resp. $l$).  Considering the map $\Phi\colon L\oplus V \to M$  defined  by
	$ \Phi(x+v)= s(x)+i (  v),$    it is easy to verify that $\Phi$ is an  isomorphism, $ i(V) $ is an ideal of $M$ and $ M=s(L)\oplus i(V) $. Then 
	, $d'=\delta'+\lambda_{r}'+\lambda_{l}'+\theta'+\mu'$ where 
	$ \lambda'=\lambda_{r}'+\lambda_{l}' $  is a representation of $ s(L) $ on $i(V)$ 
	 and $\theta  $ is a $2$-cocycle  of $s(L)$ on $i(V)$.


	Define a bilinear map $ d\colon L\oplus V\to L\oplus V $ by \[d(x+v,y+w) =\Phi^{-1}(d'(s(x)+i(v),s(y)+i(w))) .	\]
	Then
	\begin{equation}\label{ext2021}
		d'\left( \Phi(x+v),\Phi(y+w)\right)= d'(s(x)+i(v),s(y)+i(w))=\Phi\left(d(x+v,y+w) \right)          
	\end{equation} 
	
	Clearly $ \left(L\oplus V,d,\alpha+\alpha_V,\beta+\beta_{V} \right)  $ is a BiHom-Lie algebra and 
	\[E_0:0\longrightarrow (V,\mu,\alpha_{V},\beta_{V})\stackrel{i_{0}}{\longrightarrow} (L\oplus V,d,\alpha+\alpha_{V},\beta+\beta_{V})\stackrel{\pi_{0}}{\longrightarrow }(L,\delta,\alpha,\beta) \longrightarrow 0,\]	 be a split extension of $L$ by $V$
	equivalent to $E$. By \eqref{ext2021} we have 
	\begin{gather*} 
		\delta(x,y)=\pi\left( \delta' (s(x),s(y))\right) ,\, \theta(x,y)=k\left( \theta'(s(x),s(y)) \right),\\ 
	  \lambda_{l}(x,v)=k\left( \lambda_{l}'(s(x),i(v))\right) 
		,\,  \lambda_{r}(v,x)=k\left(  \lambda_{r}'(i(v),s(x))\right) 
		,\,  \mu(v,w)=k\left( ( \mu (i(v),i(w)))\right) 
	\end{gather*}
	and $ d=\delta+\theta+ \lambda_{l}+\lambda_{r}+\mu $.	
\end{proof}
We deduce from the last  theorem that any split extension of $M$ by $V$ is equivalent to 
\[0\longrightarrow (V,\mu,\alpha_{V},\beta_{V})\stackrel{i_{0}}{\longrightarrow} (L\oplus V,d,\alpha+\alpha_{V},\beta+\beta_{V})\stackrel{\pi_{0}}{\longrightarrow }(L,\delta,\alpha,\beta) \longrightarrow 0,\]
where $ i_{0}(v)=v $, $ \pi_{0}(x+v)=x $ and $d=\delta+\theta+ \lambda_{l}+\lambda_{r}+\mu   $. 
\section{Second group of cohomologie and abelian split extensions of BiHom-Lie algebras}
In this section, we search for the equivalence condition of two standard split   abelian extensions.
Define the following standard split extension by 
\[E_{0}:0\longrightarrow (V,\alpha_{V},\beta_{V})\stackrel{i_{0}}{\longrightarrow} (L\oplus V,d,\alpha+\alpha_{V},\beta+\beta_{V})\stackrel{\pi_{0}}{\longrightarrow }(L,\delta,\alpha,\beta) \longrightarrow 0,\]
\[E'_{0}:0\longrightarrow \left( V',\alpha_{V'},\beta_{V'}\right) \stackrel{i'_{0}}{\longrightarrow} (L'\oplus V',d',\alpha'+\alpha_{V'},\beta'+\beta_{V'})\stackrel{\pi'_{0}}{\longrightarrow }(L',\delta',\alpha') \longrightarrow 0,\]
where $ i_{0}(v)=v $, $ \pi_{0}(x+v)=x $. $ i'_{0}(v')=v' $, $ \pi'_{0}(x'+v')=x' $.\\
We assume that $ E_{0} $ and $ E'_{0} $ are equivalents.
Then there exist an isomorphism of BiHom-Lie algebra 
\[\Phi\colon  \left( L\oplus V,d,\alpha+\alpha_V,\beta+\beta_{V}\right)  \to \left( L'\oplus V',d',\alpha'+\alpha_V',\beta'+\beta_{V'}\right)  \]
Satisfies $ \Phi\circ i_0=i'\circ \Phi  $ and $ \pi' \circ \Phi =\Phi \circ \pi_0$.\\
We set $\Phi=s+i $ and $ s=s_1+i_1 $, $i=s_2+i_2$, where  
$s\colon L\to L'\oplus V'   $,
$s_{1}\colon L\to L',   $
$i_{1}\colon L\to V',   $
$i\colon V\to L'\oplus V' ,   $
$s_{2}\colon V\to L',   $
$i_{2}\colon V\to V'.$
We denote $\Phi(v)=v'  $ and $\Phi(x)=x'  $.\\
We have $i(v)=\Phi(v)=\Phi(i_0(v))=i'(\Phi(v))=i'(v')=v'  $\\
and $ x'=\Phi(x)  =\Phi(\pi_0(x))=\pi'(\Phi(x))=\pi'(s(x))=\pi'(s_1(x)+i_1(x))=s_1(x) $.\\
Hence $ i_1(x)=s(x)-x' $, $ i(v)=i_2(v)=v' $, $ s_2=0 $.
We have 
\begin{gather*}
d' (\Phi(x+v),\Phi(y+w))=	d' (x'+v',y'+w')\\
	=d'(s_1(x)+i_1(x)+i(v),s_1(y)+i_1(y)+i(w))\\
	=\delta'(s_{1}(x),s_{1}(y))+\theta'(s_{1}(x),s_{1}(y))
+\lambda'_l (s_{1}(x),i_1(y)+i(w))+\lambda'_r (i_1(x)+i(v),s_1(y))
\end{gather*}
Moreover, 
\begin{align*}
	&d'(\Phi(x+v),\Phi(y+w))\\
	&= \Phi(d(x+v,y+w))\\
	&= \Phi(\delta(x,y)+\theta(x,y))+\lambda_l(x,w)+\lambda_r (v,y)+\mu(v,w)\\	
	&=s_1\circ \delta(x,y)+i_1\circ \delta(x,y)+i_2\circ \theta(x,y)+i_2\circ \lambda_l(x,w)\\
&	+i_2\circ \lambda_r(v,y).
\end{align*}
So,
\begin{align}
s_1\circ \delta(x,y)&=\delta'(s_{1}(x),s_{1}(y))\\
i_1\circ \delta(x,y)+i_2\circ \theta(x,y)&=  \theta'(s_{1}(x),s_{1}(y))
+\lambda'_l (s_{1}(x),i_1(y))\nonumber\\
+&\lambda'_r (i_1(x),s_1(y))
\label{cobord26nov}\\
i\circ \lambda_l(x,w)&=\lambda'_l (s_{1}(x),i(w))
\\
	i\circ \lambda_r(v,y)&=\lambda'_r (i(v),s_1(y))     
\end{align}
\begin{defn} 
We call $1$-coboundary operator of BiHom-Lie algebra $L$ the map $D_{1}\colon C^{1}(L,V)\to C^{2}(L,V)$ defined by
\[D_{1}(f)(x,y)=-f\left( \delta(x,y)\right) +\lambda_l (x,f(y))+\lambda_r (f(x),y).     \]	
\end{defn}	

Define $h\colon L'\to V'$ by $h(s_1(x))=i_1(x)$ then
\[D_{1}(h)(s_1(x),s_1(y))=-h\left( \delta'(s_1(x),s_1(y))\right) +\lambda'_l (s_{1}(x),f(s_1(y)))+\lambda'_r (fs_1(x),s_1(y)).\]
Hence, by \eqref{cobord26nov}, we obtain
\begin{gather}
i_2\circ \theta(x,y)=\theta'(s_{1}(x),s_{1}(y))+D_{1}(h)	(s_1(x),s_1(y)).\label{equivalent}                                  
\end{gather}
\begin{thm}
	With the above notations,
the map $D_{1}(h)$ is a $2$ cocycle  of $L'$ on $V'$ with respect the representation $ \lambda'_r+\lambda'_l $ i.e 
\[ D_2\circ D_1=0\] 	
\end{thm}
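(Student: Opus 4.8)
The plan is to prove the equivalent statement that $D_1(f)\in Z^2(L,V,\lambda_r+\lambda_l)$ for every $f\in C^1(L,V)$; since $Z^2(L,V,\lambda_r+\lambda_l)=\ker(D_2)$, this is exactly $D_2\circ D_1=0$ (taking $L=L'$, $V=V'$, $f=h$ then yields the statement as written). Write $\Theta:=D_1(f)$, so that $\Theta(x,y)=-f(\delta(x,y))+\lambda_l(x,f(y))+\lambda_r(f(x),y)$, and check the two defining properties of a $2$-cocycle in turn.

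\emph{Step 1: $\Theta$ is a $2$-cochain.} I expand $\Theta(\beta(x),\alpha(y))$ and use that $f$ is a $1$-cochain, i.e. $f\circ\alpha=\alpha_V\circ f$ and $f\circ\beta=\beta_V\circ f$. The first summand becomes $f(\delta(\beta(y),\alpha(x)))$ by skew-symmetry of $\delta$; the third summand $\lambda_r(\beta_V(f(x)),\alpha(y))$ becomes $-\lambda_l(\beta(y),f(\alpha(x)))$ by the representation relation $\lambda_r(\beta_V(v),\alpha(y))=-\lambda_l(\beta(y),\alpha_V(v))$; and the second summand $\lambda_l(\beta(x),\alpha_V(f(y)))$ becomes $-\lambda_r(\beta_V(f(y)),\alpha(x))=-\lambda_r(f(\beta(y)),\alpha(x))$ by the same relation read in the opposite direction. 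Summing, $\Theta(\beta(x),\alpha(y))=-\Theta(\beta(y),\alpha(x))$, so $\Theta\in C^2(L,V)$.

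\emph{Step 2: the cocycle identity $D_2(\Theta)=0$.} I substitute $\Theta=D_1(f)$ into
\[\Theta(\beta^2(x),\delta(\beta(y),\alpha(z)))-\Theta(\beta^2(y),\delta(\beta(x),\alpha(z)))+\Theta(\beta^2(z),\delta(\beta(x),\alpha(y)))\]
\[+\,\lambda_l(\beta^2(x),\Theta(\beta(y),\alpha(z)))-\lambda_l(\beta^2(y),\Theta(\beta(x),\alpha(z)))+\lambda_l(\beta^2(z),\Theta(\beta(x),\alpha(y)))=0,\]
and use $f\circ\alpha=\alpha_V\circ f$, $f\circ\beta=\beta_V\circ f$ to move $f$ past every twisting map. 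The resulting terms fall into three groups: (a) the terms of the form $-f(\delta(\beta^2(\cdot),\delta(\beta(\cdot),\alpha(\cdot))))$ with the three cyclic signs, whose sum is $-f$ applied to the BiHom-Jacobiator of $\delta$, hence $0$; (b) the terms carrying exactly one $\delta$ together with one of $\lambda_l,\lambda_r$; and (c) the terms quadratic in $\lambda_l,\lambda_r$. Groups (b) and (c) are precisely the homogeneous components (in the bidegree splitting into $C^{2,1}$ and $C^{3,0}$ used in Section 2) of the identity $[d,d]=0$ for the semidirect-sum bracket $d=\delta+\lambda_r+\lambda_l$ on $L\oplus V$, evaluated on arguments one of which has been replaced by $f(\cdot)$; they cancel by the module axioms for the representation $(\lambda_l,\lambda_r)$ (the skew relation, the compatibility of $\lambda_r$ with $\delta$, and the remaining module identities). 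Hence the whole expression vanishes and $\Theta\in Z^2(L,V,\lambda_r+\lambda_l)$, i.e. $D_2\circ D_1=0$.

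A cleaner, computation-free route is also available and worth recording: for $f\in C^1(L,V)$ the linear map $\mathrm{id}_{L\oplus V}+f$ (with $f$ extended by $0$ on $V$) commutes with $\alpha+\alpha_V$ and $\beta+\beta_V$ and is invertible with inverse $\mathrm{id}_{L\oplus V}-f$ (since $f(L)\subseteq V$ and $f|_V=0$, so $f^2=0$); conjugating $d=\delta+\lambda_r+\lambda_l$ by it yields a one-parameter family $d_t$ of BiHom-Lie brackets on $L\oplus V$ whose $L\times L\to L$ part is still $\delta$, so $[d_t,d_t]=0$ for all $t$, and differentiating at $t=0$ gives $[d,\dot d_0]=0$; since the $V$-valued $L\times L$-component of $\dot d_0$ is exactly $D_1(f)$, this reads $D_2(D_1(f))=0$. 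The only real obstacle is the term bookkeeping in Step 2 (or, on the second route, checking that conjugation leaves $\delta$ untouched and identifying $\dot d_0$ with $D_1(f)$); the essential inputs in either case are the BiHom-Jacobi identity for $\delta$, the module axioms for $\lambda_r+\lambda_l$, and the $1$-cochain conditions $f\circ\alpha=\alpha_V\circ f$, $f\circ\beta=\beta_V\circ f$.
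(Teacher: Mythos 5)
Your main argument (Steps 1--2) is essentially the paper's proof: expand $D_2(D_1(f))$ term by term and cancel in groups using the BiHom-Jacobi identity for $\delta$ and the representation axioms for $\lambda_l,\lambda_r$, together with the $1$-cochain conditions $f\circ\alpha=\alpha_V\circ f$ and $f\circ\beta=\beta_V\circ f$; the paper carries out exactly the bookkeeping you defer in Step 2, grouping the resulting terms into two triples killed by the module axioms, two triples killed by the BiHom-Jacobi identity, and two pairs that cancel directly. Your Step 1 (twisted skew-symmetry of $D_1(f)$, i.e.\ that it is genuinely a $2$-cochain) is a worthwhile addition that the paper leaves implicit. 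The closing sketch via conjugation by $\mathrm{id}+f$ is a genuinely different and more conceptual route, but as written it is only a sketch: one would still need to verify that conjugation preserves the $L\times L\to L$ component $\delta$ and to identify the derivative $\dot d_0$ with $D_1(f)$, which amounts to much of the same bookkeeping.
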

\begin{proof}
We have 
\begin{align*}
	 D_{2}\circ D_{1}(h)&=[\delta+\lambda_{r}+\lambda_{l},D_{1}(h)]\\
	&=\delta\circ D_{1}(h) +\lambda_{r}\circ D_{1}(h) +\lambda_{l}\circ D_{1}(h) \\+&  D_{1}(h) \circ\delta+D_{1}(h)\circ \lambda_{r}+D_{1}(h)\circ \lambda_{l}                                    
\end{align*}
Since $D_{1}(h)\in C^{2}(L',V') $, we have $ \delta\circ D_{1}(h)=0 $, $ \lambda_{r}\circ D_{1}(h)=0 $, $ D_{1}(h)\circ \lambda_{r}=0 $ and $ D_{1}(h)\circ \lambda_{l}=0 $. Then, $	 D_{2}\circ D_{1}(h)=
\lambda_{l}\circ D_{1}(h) +D_{1}(h)\circ \delta    $. Therefore,
\begin{align}
 &D_{2}\circ D_{1}(h)(x',y',z')\nonumber\\	
&=\lambda_{l}\left(\beta^{2}(x'),D_{1}(h)(\beta(y'),\alpha(z')) \right)\label{a17dec2021}\\
-&\lambda_{l}\left(\beta^{2}(y'),D_{1}(h)(\beta(x'),\alpha(z')) \right)\label{b17dec2021}\\
+&\lambda_{l}\left(\beta^{2}(z'),D_{1}(h)(\beta(x'),\alpha(y')) \right)\label{c17dec2021}	
\end{align}
\begin{gather}
+D_{1}(h)	\left(\beta^{2}(x),\delta(\beta(y),\alpha(z)) \right)\label{d17dec2021}\\
-D_{1}(h)\left(\beta^{2}(y'),\delta(\beta(x'),\alpha(z')) \right)\label{e17dec2021}\\
+D_{1}(h)\left(\beta^{2}(z'),\delta(\beta(x'),\alpha(y')) \right)\label{f17dec2021}
\end{gather}
Using $h\circ \beta=\beta\circ h$, we have 
\begin{gather}
	\eqref{a17dec2021}= 
		-\lambda_{l}\Big(\beta^{2}(x'), h\delta'(\beta(y'),\alpha(z')) \Big)\label{28nov}\\
		+\lambda_{l}\Big(\beta^{2}(x'),\lambda'_l (\beta(y'),\alpha(h(z')))  	\Big)\label{jaco3Nov28}\\
		+\lambda_{l}\Big(\beta^{2}(x'),\lambda'_r ( \beta(h(y')),\alpha(z'))	\Big)\label{di27Nov1}
\end{gather}
\begin{gather}
	\eqref{b17dec2021}= 
		\lambda_{l}(\beta^{2}(y'),h\delta'(\beta(x'),\alpha(z')))\label{dim28Nov}\\
		-\lambda_{l}(\beta^{2}(y'),\lambda'_l (\beta(x'),\alpha(h(z'))	))\label{jaco2Nov28}\\
		-\lambda_{l}(\beta^{2}(y'),\lambda'_r (\beta(h(x')),\alpha(z'))	)\label{r2Nov27b}
\end{gather}
\begin{gather}
	\eqref{c17dec2021}= 	-\lambda_{l}\Big(\beta^{2}(z'),h\delta'(\beta(x'),\alpha(y'))\Big)\label{2dim28nov}\\
	+\lambda_{l}\Big(\beta^{2}(z'),\lambda'_l (\beta(x'),\alpha(h(y')))\Big)\label{r3Nov28c}\\
	+\lambda_{l}\Big(\beta^{2}(z'),\lambda'_r (\beta(h(x')),\alpha(y'))\Big)\label{r3Nov27c}
\end{gather}
\begin{gather}
	\eqref{d17dec2021}=-h\delta'(\beta^{2}(x'),\delta(\beta(y'),\alpha(z')))\label{Nov27a}\\
	+\lambda'_l (\beta^{2}(x),h(\delta(\beta(y),\alpha(z)) ))\label{Sam27Nov2}\\
	+\lambda'_r (\beta^{2}(h(x')),\delta(\beta(y),\alpha(z)) )	\label{r1Nov27a} 
\end{gather}
\begin{gather}
	\eqref{e17dec2021}=h\delta'(\beta^{2}(y'), \delta(\beta(x'),\alpha(z'))    )\label{Nov27b}\\
	-\lambda'_l (\beta^{2}(y'),h(\delta(\beta(x'),\alpha(z'))))\label{Nov28a}\\
	-\lambda'_r (\beta^{2}(h(y')),s_1(\delta(\beta(x'),\alpha(z')) ))\label{repNov28a}
\end{gather}
\begin{gather}
	\eqref{f17dec2021}=-h\delta'(\beta^{2}(z'),\delta(\beta(x'),\alpha(y')))\label{Nov27c}\\
	+\lambda'_l (\beta^{2}(z'),h(\delta(\beta(x'),\alpha(y'))))\label{1Nov27c}\\
	+\lambda'_r (\beta^{2}(h(z')),\delta(\beta(x'),\alpha(y')))\label{jaco1Nov28}
\end{gather}
Since $\lambda'_r +	\lambda'_l$ is a representation of $L'$ on $V'$, we have 
\[\eqref{r1Nov27a}+\eqref{r2Nov27b}+\eqref{r3Nov27c}=0\]
and
\[ \eqref{repNov28a}+\eqref{di27Nov1}+\eqref{r3Nov28c}=0.  \]
By the BiHom-Jacobi identity, we obtain that
\[\eqref{Nov27a}+\eqref{Nov27b}+\eqref{Nov27c}=0  \]
and 
\[ \eqref{jaco1Nov28}+\eqref{jaco3Nov28}+\eqref{jaco2Nov28}=0. \]
it's clear that $\eqref{28nov}+\eqref{Sam27Nov2}=0$ and $\eqref{1Nov27c}+\eqref{2dim28nov}=0$. This completes the proof. 		
\end{proof}
\begin{defn}
The space of $2$-coboundaries of $L$ on $V$ is 
\[ B^{2}(L,V)=Im(D_1)=\{D_1(f)\mid f\in C^1 (L,V)  \}. \]	
\end{defn}	
\begin{defn}
We call the $2^{th}$ cohomology group of the BiHom-Lie algebra $L$, the quotient	
	    \[ H^2(L,V,\lambda_{r}+\lambda_{r}) =Z^{2}(L,V,\lambda_{r}+\lambda_{r})/ B^{2}(L,V,\lambda_{r}+\lambda_{r})=\{\bar{\theta}\mid \theta\in Z^{2}(L,V,\lambda_{r}+\lambda_{r})\} . \]`
	    
\end{defn}

\begin{lem}

Let two equivalents standard split abelian extension
\begin{gather*}
	\xymatrix {\relax 0 \ar[r] &( V,\alpha_V,\beta_V )   \ar[d]_{\varphi}\relax\ar[r]^-{i_{0}} \relax & (L\oplus V,d,\alpha+\alpha_V,\beta+\beta_V ) \ar[d]_{\Phi} \ar[r]^-{\pi_{0}}& (L,\delta,\alpha,\beta)\ar[r] \ar[d]_{s}&0 \\
	\relax	0\ar[r] & ( V',\alpha_V',\beta_V' ) \ar[r]^-{i_{0}'}     & (L\oplus V,d,\alpha+\alpha_V,\beta+\beta_V ) \ar[r]^-{\pi_{0}'} &(L',\delta',\alpha',\beta')\ar[r]&0}
\end{gather*}
where $d=\delta+\lambda_l+\lambda_{r}+\theta  $ and $d'=\delta'+\lambda_l'+\lambda_{r}'+\theta'  $. Then, $i\circ \theta$ is a $2$-cocycle of $L'$ on $V'$ and it is equivalent to $\theta'$ (i.e $\overline{ i\circ \theta}=\bar{\theta'} $).
\end{lem}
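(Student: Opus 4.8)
The plan is to read the lemma off from equation~\eqref{equivalent} together with the theorem just proved, after recording the structural facts hidden in the decomposition $\Phi=s+i$, $s=s_1+i_1$, $i=i_2$ carried out above. First I would note that, since $\Phi$ is an isomorphism and the two rows are exact, the five‑lemma‑type argument forces $s_2=0$ and makes the induced maps $s_1=s\colon L\to L'$ and $i=i_2\colon V\to V'$ isomorphisms of the underlying BiHom‑Lie (resp.\ Hom‑module) structures; in particular $s_1\circ\delta=\delta'\circ(s_1\times s_1)$, $s_1\circ\alpha=\alpha'\circ s_1$ and $s_1\circ\beta=\beta'\circ s_1$. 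From $\Phi\circ(\alpha+\alpha_V)=(\alpha'+\alpha_{V'})\circ\Phi$ and its analogue for $\beta$, restricted to $L$, one also gets $i_1\circ\alpha=\alpha_{V'}\circ i_1$ and $i_1\circ\beta=\beta_{V'}\circ i_1$, so the map $h\colon L'\to V'$ defined by $h(s_1(x))=i_1(x)$ is a well‑defined element of $C^1(L',V')$ (this compatibility is exactly what was used implicitly, as ``$h\circ\beta=\beta\circ h$'', in the preceding theorem).

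Next I would transport $i\circ\theta$ to a genuine $2$‑cochain on $L'$: since $s_1$ is onto, $\widehat\theta(x',y'):=i_2\bigl(\theta(s_1^{-1}(x'),s_1^{-1}(y'))\bigr)$ is well defined, and the twisted skew‑symmetry~\eqref{cocyDec} of $d$ forces $\widehat\theta\in C^2(L',V')$; this is the cochain denoted $i\circ\theta$ in the statement. Rewriting~\eqref{equivalent} with $x'=s_1(x)$, $y'=s_1(y)$ yields $\widehat\theta=\theta'+D_1(h)$ as maps $L'\times L'\to V'$. Now $\theta'$ is a $2$‑cocycle of $L'$ on $V'$ because $E_0'$ is a standard split extension (the characterisation of standard split extensions in Section~2), while $D_1(h)$ lies in the image of $D_1$ and, by the theorem just proved ($D_2\circ D_1=0$), also in $Z^2(L',V',\lambda_r'+\lambda_l')=\ker D_2$. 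Hence $i\circ\theta=\theta'+D_1(h)\in Z^2(L',V',\lambda_r'+\lambda_l')$, which is the first assertion; passing to $H^2(L',V',\lambda_r'+\lambda_l')$ the coboundary $D_1(h)$ dies, so $\overline{i\circ\theta}=\overline{\theta'}$, the second assertion.

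I expect the only real work to be the bookkeeping in the first paragraph: checking that $s_1$ is a genuine BiHom‑Lie isomorphism (so that $\widehat\theta$ is well defined and its class lives in $H^2(L',V')$) and that $h$ genuinely belongs to $C^1(L',V')$ (so that $D_1(h)$ is a legitimate coboundary). Both are short diagram chases using that $\Phi$ is an isomorphism of BiHom‑Lie algebras compatible with the two short exact sequences; once they are in place, the cocycle property of $i\circ\theta$ and the equality of cohomology classes are immediate from~\eqref{equivalent}, from the theorem just proved, and from the fact that the $\theta$‑component of any standard split extension is a $2$‑cocycle.
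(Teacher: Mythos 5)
Your proposal is correct and follows essentially the same route as the paper: both rest entirely on the identity \eqref{equivalent}, $i_2\circ\theta(x,y)=\theta'(s_1(x),s_1(y))+D_1(h)(s_1(x),s_1(y))$, from which the difference of the two cocycles is the coboundary $D_1(h)$ and the cohomology classes coincide. You are in fact more complete than the paper's one-line proof, since you also verify the first assertion (that $i\circ\theta$, transported to $L'$ via $s_1^{-1}$, is a genuine $2$-cocycle, using $D_2\circ D_1=0$ and the well-definedness of $h\in C^1(L',V')$), which the paper leaves implicit.
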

\begin{proof}
	 By \eqref{equivalent}, we have $i_2\circ \theta(x,y)-\theta'(s_{1}(x),s_{1}(y))\in B^{2}(L',V',\lambda_{r}+\lambda_{r})$. Hence $  \overline{ i\circ \theta}=\bar{\theta'} .$                           
\end{proof}
\begin{lem}
If $\theta$ and $\theta'$ are two equivalent $2$ cocycle of $L$ on $V$. Then, 
the extensions $(L\oplus V,d,\alpha+\alpha_V,\beta+\beta_V )$ and $(L\oplus V,d',\alpha+\alpha_V,\beta+\beta_V )$ are equivalent.
\end{lem}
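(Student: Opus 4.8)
The plan is to realise the equivalence by an explicit isomorphism of $L\oplus V$ onto itself, built from the $1$-cochain witnessing that $\theta$ and $\theta'$ are cohomologous. By hypothesis $\overline{\theta}=\overline{\theta'}$ in $H^2(L,V,\lambda_r+\lambda_l)$, so $\theta-\theta'\in B^2(L,V)=\mathrm{Im}(D_1)$, i.e. there is an $f\in C^1(L,V)$ with
\[\theta(x,y)-\theta'(x,y)=D_1(f)(x,y)=-f(\delta(x,y))+\lambda_l(x,f(y))+\lambda_r(f(x),y),\qquad\forall x,y\in L.\]
I would then define $\Phi\colon L\oplus V\to L\oplus V$ by $\Phi(x+v)=x+f(x)+v$ for $x\in L$, $v\in V$. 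It is a linear bijection, with inverse $x+v\mapsto x-f(x)+v$, and it clearly fixes the $L$-component.

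First I would check compatibility with the twisting maps: since $f$ is a $1$-cochain, $f\circ\alpha=\alpha_V\circ f$ and $f\circ\beta=\beta_V\circ f$, whence $\Phi\circ(\alpha+\alpha_V)=(\alpha+\alpha_V)\circ\Phi$ and $\Phi\circ(\beta+\beta_V)=(\beta+\beta_V)\circ\Phi$. Next I would verify that $\Phi$ intertwines the brackets, i.e. $\Phi\big(d(x+v,y+w)\big)=d'\big(\Phi(x+v),\Phi(y+w)\big)$. Expanding the left-hand side with $d=\delta+\lambda_r+\lambda_l+\theta$ gives $\delta(x,y)+f(\delta(x,y))+\theta(x,y)+\lambda_l(x,w)+\lambda_r(v,y)$, while the right-hand side with $d'=\delta+\lambda_r+\lambda_l+\theta'$ and $\Phi(x+v)=x+(f(x)+v)$, $\Phi(y+w)=y+(f(y)+w)$ gives $\delta(x,y)+\theta'(x,y)+\lambda_l(x,f(y))+\lambda_l(x,w)+\lambda_r(f(x),y)+\lambda_r(v,y)$. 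The two expressions agree in their $L$-components trivially and in their $V$-components exactly by the coboundary identity displayed above. Hence $\Phi$ is an isomorphism of BiHom-Lie algebras; note that both sides are genuine BiHom-Lie algebras because $\theta,\theta'\in Z^2(L,V,\lambda_r+\lambda_l)$, by the structure theorem of Section~2.

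Finally I would check the two commuting triangles defining equivalence of extensions: $\Phi(i_0(v))=\Phi(v)=v=i_0(v)$, so $\Phi\circ i_0=i_0\circ\mathrm{id}_V$, and $\pi_0(\Phi(x+v))=\pi_0(x+f(x)+v)=x=\pi_0(x+v)$, so $\pi_0\circ\Phi=\mathrm{id}_L\circ\pi_0$. Thus the two standard split abelian extensions are equivalent. There is no genuine obstacle here: the only point requiring care is that the bracket-morphism condition for $\Phi$ is \emph{equivalent} to the coboundary relation $\theta-\theta'=D_1(f)$, so the whole lemma reduces to the single computation above together with the routine verification that $\Phi$ commutes with $\alpha+\alpha_V$ and $\beta+\beta_V$.
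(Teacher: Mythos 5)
Your proposal is correct and follows essentially the same route as the paper: you realise the equivalence by the explicit isomorphism $\Phi(x+v)=x+f(x)+v$ (the paper uses $\Phi(x+v)=x-h(x)+v$ with $\theta'=\theta+D_1(h)$, so your $f$ is just $-h$ and the two maps coincide), and the bracket computation reducing to the coboundary identity is the same. Your additional checks of compatibility with $\alpha+\alpha_V$, $\beta+\beta_V$ and of the commuting diagram are details the paper leaves implicit, but they do not change the argument.
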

\begin{proof}
Let $\theta$ and $\theta'$ are two equivalent $2$ cocycle of $L$ on $V$. Then, there exist $h\in C^1(L,V)$ satisfies $\theta'=\theta+D_1(h)$. Define the linear map $\Phi\colon L\oplus V\to L\oplus V$ by $\Phi(x+v)=x-h(x)+v$. Clearly, $\Phi$ is an isomorphism. Moreover, we have
\begin{gather*}
d'\left(\Phi(x+v),\Phi(y+w) \right)=d'\left(x-h(x)+v,y-h(y)+w) \right) \\
=\delta(x,y) +\lambda_{l}(x,w)+\lambda_{r}(v,y)-	\lambda_{l}(x,h(y))-
\lambda_{r}(h(x),y)+\theta'(x,y)\\
=\delta(x,y) +\lambda_{l}(x,w)+\lambda_{r}(v,y)-	\lambda_{l}(x,h(y))-
\lambda_{r}(h(x),y)+\theta(x,y)+D_1(h)(x,y)\\
=\delta(x,y) +\lambda_{l}(x,w)+\lambda_{r}(v,y)+\theta(x,y)-h\left(\delta(x,y) \right)\\
=\Phi(d(x+v,y+w)).
\end{gather*}
Hence, $\Phi\colon L\oplus V\to L\oplus V$ is an isomorphism of BiHom-Lie algebra. Therefore,                                  
 the following diagram 
\begin{gather*}
	\xymatrix {\relax 0 \ar[r] &( V,\alpha_V,\beta_V )   \ar[d]_{id_V}\relax\ar[r]^-{i_{0}} \relax & (L\oplus V,d,\alpha+\alpha_V,\beta+\beta_V ) \ar[d]_{\Phi} \ar[r]^-{\pi_{0}}& (L,\delta,\alpha,\beta)\ar[r] \ar[d]_{id_L}&0 \\
		\relax	0\ar[r] & ( V,\alpha_V,\beta_V ) \ar[r]^-{i_{0}}     & (L\oplus V,d',\alpha+\alpha_V,\beta+\beta_V ) \ar[r]^-{\pi_{0}} &(L,\delta,\alpha,\beta)\ar[r]&0}
\end{gather*}
gives, the extensions $(L\oplus V,d,\alpha+\alpha_V,\beta+\beta_V )$ and $(L\oplus V,d',\alpha+\alpha_V,\beta+\beta_V )$ are equivalent.	
	\begin{thm}
		The set $ Ext(L,V,\lambda_{r}+\lambda_{l}) $  of equivalence classes of  split extensions of $ \left(L,\delta, \alpha,\beta\right)  $  by an abelian BiHom-Lie algebra  $(V,\alpha_{V},\beta_{V})$
		is one-to-one   correspondence with $ Z^2(L,V,\lambda_{r}+\lambda_{l})/ B^2(L,V,\lambda_{r}+\lambda_{l})$, that is 
		\[ Ext(L,V,\lambda_{r}+\lambda_{l})\cong  Z^2(L,V,\lambda_{r}+\lambda_{l})/ B^2(L,V,\lambda_{r}+\lambda_{l}). \]	                     
	\end{thm}

\end{proof}


\begin{thebibliography}{99}
	
	
	
	
	
	
	
	
	
	
	
	
	
	
	
	
	
	
	
	
	
	
	
	
	
	
	
	
	
	
	
	
	
	
	
	
	
	
	
	
	
	
	\bibitem{DBT}
	E.A. De Kerf, G.G.A. Bäuerle and A.P.E. Ten Kroode,
	\emph{ Chapter 18 Extensions of Lie algebras },
	Studies in Mathematical Physics. \textbf{7} (North-Holland, 1997) 4-48.
	
	
	
	
	
	
	
	
	
	
	
	
	
	
	\bibitem{FP}
	A. Fialowski and M. Penkava,
	\emph{Extensions of (super) Lie algebras},
	Communications in Contemporary Mathematics \textbf{11} (2009) 709-737.
	
	
	
	
	
	
	
	\bibitem{GMMP}
	G. Graziani, A. Makhlouf, C. Menini and F. Panaite. \emph{BiHom-associative algebras,
		BiHom-Lie algebras and BiHom-bialgebras}, Symmetry Integrability and Geometry:
	Methods and Applications, 11\textbf{(2015)} 1-34.
	
	
	
	
	
	
	
	
	
	
	\bibitem{HLS}
	J.T. Hartwig, D. Larsson and S.D. Silvestrov, \emph{Deformations of Lie algebras using $\sigma$-derivations}, J. Algebra \textbf{295}  (2006)
	314–361.
	
	
	
	
	
	
	
	
	
	
	
	
	
	
	
	
	
	
	
	
	
	
	
	
	
	
	
	
	
	
	
	
	
	
	
	
	
	
	
	
	
	
	
	
	
	
	
	
	
	
	
	
	
	
	
	
	
	
	
	
	
	
	
	
	
	
	\bibitem{M}
	M. Schottenloher,  \emph{ Central Extensions of Lie Algebras and Bargmann’s Theorem}. Lect. Notes
	Phys. \textbf{759} (2008) 63-73.
	
	
	
	
	
	
	
	
	
	
	
	
	
	
	
	
	
	
	
	
	
	
	
	
	
	.
	
	
	
	
	
	
	
	
	
	
	
	
	
	
	
	
\end{thebibliography}
\end{document}